\renewcommand{\d}{\mathrm{d}}
\renewcommand{\L}{\Lambda}
\renewcommand{\Im}{\operatorname{Im}}
\renewcommand{\Re}{\operatorname{Re}}
\newcommand{\ts}{\textstyle }
\newcommand{\cJ}{{\mathbb J}}
\newcommand{\bbR}{{\mathbb R}}
\newcommand{\bbC}{{\mathbb C}}
\newcommand{\bbP}{{\mathbb P}}
\newcommand{\E}{{\mathrm e}}
\newcommand{\iC}{{\mathrm i}}
\newcommand{\GL}{\operatorname{GL}}
\newcommand{\SO}{\operatorname{SO}}
\newcommand{\SU}{\operatorname{SU}}
\newcommand{\Un}{\operatorname{U}}
\newcommand{\Or}{\operatorname{O}}
\newcommand{\Gr}{\operatorname{Gr}}
\newcommand{\Sp}{\operatorname{Sp}}
\newcommand{\Diff}{\operatorname{Diff}}
\newcommand{\phm}{\phantom{-}}
\newcommand{\Js}{\mathsf{J}}
\newcommand{\bi}{\bar\imath}
\newcommand{\bj}{\bar\jmath}
\newcommand{\bk}{\bar k}
\newcommand{\bell}{\bar\ell}
\newcommand{\w}{{\mathchoice{\,{\scriptstyle\wedge}\,}{{\scriptstyle\wedge}}
      {{\scriptscriptstyle\wedge}}{{\scriptscriptstyle\wedge}}}}
\newcommand{\Gtwo}{\ifmmode{{\rm G}_2}\else{${\rm G}_2$}\fi}
\newcommand{\be}{\begin{equation}}
\newcommand{\ee}{\end{equation}}
\newcommand{\bpm}{\begin{pmatrix}}
\newcommand{\epm}{\end{pmatrix}}
\numberwithin{equation}{section}
\newtheorem{theorem}{Theorem}
\newtheorem{lemma}{Lemma}
\theoremstyle{remark}
\newtheorem{definition}{Definition}
\newtheorem{remark}{Remark}
\begin{document}

\author[R. Bryant]{Robert L. Bryant}
\address{Duke University Mathematics Department\\
         P.O. Box 90320\\
         Durham, NC 27708-0320}
\email{\href{mailto:bryant@math.duke.edu}{bryant@math.duke.edu}}
\urladdr{\href{http://www.math.duke.edu/~bryant}%
         {http://www.math.duke.edu/\lower3pt\hbox{\symbol{'176}}bryant}}

\title[Chern's six-sphere study]
      {S.-s. Chern's study of \\
       almost-complex structures\\
        on the six-sphere}

\date{April 5, 2021}

\begin{abstract}
In April 2003, S.-s.~Chern began a study of
almost-complex structures on the $6$-sphere, 
with the idea of exploiting the special properties 
of its well-known almost-complex structure invariant
under the exceptional group $\Gtwo$.  
While he did not solve the (currently still open) problem of
determining whether there exists an integrable almost-complex
structure on~$S^6$, he did prove a significant identity
that resolves the question for an interesting class of
almost-complex structures on~$S^6$.
\end{abstract}

\subjclass{
 53A55, 
 53B15
}

\keywords{almost-complex structures, integrability, the moving frame}

\thanks{
Thanks to Duke University for its support via a research grant 
and to the National Science Foundation 
for its support via grants DMS-0103884 and~DMS-1359583.  
\hfill\break
\hspace*{\parindent} 
This is Draft Version~$2.0$, begun on February 6, 2005, 
and last edited on April 5, 2021.
}

\maketitle

\setcounter{tocdepth}{2}
\tableofcontents

\section{Introduction}\label{sec: intro}

In the spring of 2003, Professor Shiing-shen Chern turned
his attention to the old problem of determining whether or not
there is a complex structure on the $6$-sphere.%
\footnote{According to a result of Borel and Serre~\cite{BorelSerre1953},
the only spheres that support even an almost-complex structure 
are $S^2$ and $S^6$.  Of course, $S^2$ supports a complex structure,
unique up to diffeomorphism.  While $S^6$ famously supports 
an almost-complex structure, as will be discussed below, 
all attempts to date to determine whether it supports 
a complex structure have been unsuccessful.
N.B.: Throughout this article, 
in view of the Newlander-Nirenberg Theorem~\cite{NewNiren}, 
I will use the terms `complex structure'
and `integrable almost-complex structure' as synonyms.}

Chern's approach was quite original:  He chose to use the known,
very homogeneous, almost-complex structure~$\Js$ on~$S^6$ 
as a sort of `background reference' 
and to analyze the integrability equations 
for an arbitrary almost-complex structure~$J$ on $S^6$
by studying the invariants of $J$ relative to~$\Js$.  
As was his wont, he used the structure equations 
and the moving frame associated to $\Gtwo$, the symmetry group of $\Js$, 
to study the consequences of integrability of~$J$.  

As he carried out the process of frame adaptation 
and differentiation of the structure equations, 
he discovered a rather unexpected and remarkable identity
and found that he could use this to eliminate some possibilities
for complex structures on~$S^6$.  
He privately circulated a manuscript detailing his calculations
and arguments.  Upon examination of his argument, however, 
it was realized that Professor Chern had inadvertently made
a hidden assumption about the algebraic invariants of~$J$ 
relative to the reference~$\Js$, so that his identity did not hold
universally and hence could not, without some further new idea, 
be used to resolve the main existence question. 

Nevertheless, undiscouraged, Professor Chern continued his work 
on the main problem, and I had the privilege of corresponding
with him about his further ideas in this area 
up through the fall of~2004.  
Unfortunately, Professor Chern passed away on December 3, 2004,
before he could fully develop his further ideas.

While I knew that Professor Chern's approach had not solved
the problem, I found the identity he had discovered 
(via a clever and delicate moving frames calculation) 
to be quite interesting and felt that it deserved to be recorded 
and remembered.
Unfortunately, other duties at the time delayed my collecting 
and clarifying my thoughts on the matter.  Now, as the twentieth
anniversary of Professor Chern's passing approaches, 
I have been reminded of this issue and so was motivated 
to prepare this note about Professor Chern's idea 
and his main result in this area.

It is simple to state the result:  The symmetry group of~$\Js$
preserves both a metric~$g$ and a $2$-form~$\omega$ on~$S^6$.  
A.~Blanchard~\cite{Blanchard1953} had proved in~1953
that there is no complex structure on~$S^6$ 
that is compatible with the metric~$g$.%
\footnote{
Unfortunately, Blanchard's result was apparently forgotten. 
It was rediscovered independently by C.~LeBrun~\cite{LeBrun1987}
in~1987.} 
Chern's identity implies 
that there is no complex structure on~$S^6$ 
that is compatible with the $2$-form~$\omega$.  

It turns out that these two cases are quite different:
The condition of compatibility with~$g$ is a system of~$12$
pointwise algebraic equations on an almost-complex structure~$J$.
As Blanchard's analysis shows, the integrability conditions 
for such almost-complex structures are an involutive system 
whose general local solution depends on three holomorphic functions 
of three complex variables.  
In contrast, the condition of compatibility with~$\omega$ 
is a system of only~$6$ pointwise algebraic conditions 
on an almost-complex structure~$J$. 
Chern's identity shows that the integrability conditions 
for such almost-complex structures do \emph{not} form an involutive system; 
indeed, his computation uncovers the nonvanishing torsion 
that proves its non-involutivity.
  
In this note, I explain Chern's result (see Theorem~\ref{thm: chern}) 
and give the proof, basically along the lines he originally proposed, 
but with a few simplifications that became clear in hindsight.

Meanwhile, in~2013, N.~Daurtseva, 
while studying complex structures in dimension~$6$
that are compatible with the canonical $2$-form
of a nearly-K\"ahler structure (a more general situation
than that considered by Chern), independently re-discovered
Chern's results~\cite{Daurtseva2014}, though her analysis
proceeded along somewhat different lines.%
\footnote{I was unaware of her results in~2014
when I posted an earlier version of this note to the arXiv.
}
 
Once one realizes what Chern's calculation means, 
there is a way to get to the essential identity 
without having to carry out the frame adaptations and normalizations 
that made Chern's original argument somewhat difficult to follow.
Indeed, one then sees that Chern's argument applies verbatim
to prove a more general statement about compatible complex structures 
on what I have called 
\emph{elliptic definite almost-symplectic $6$-manifolds}, 
a class of structures that includes, for example, 
all strictly nearly-K\"ahler $6$-manifolds.
In the final section of the note, 
I make some remarks about these and related matters.

\section{The structure equations}\label{sec: streqs}

This section will collect the main results about the group \Gtwo\
that will be needed.  The reader may consult~\cite{Bryant1982},
for details concerning the properties of the 
group \Gtwo\ that are not proved here.

\subsection{The group~\Gtwo}  Let~$e_1,e_2,\ldots,e_7$ denote
the standard basis of~$\bbR^7$ (whose elements will be referred to as 
column vectors of height $7$) and let~$e^1,e^2,\ldots,e^7:\bbR^7\to\bbR$ 
denote the corresponding dual basis.

For notational simplicity, write~$e^{ijk}$ for the wedge 
product~$e^i\w e^j\w e^k$ in~$\L^3\bigl((\bbR^7)^*\bigr)$.  
Define
\begin{equation}\label{eq: phi def}
\phi = e^{123}+e^{145}+e^{167}+e^{246}-e^{257}-e^{347}-e^{356}.
\end{equation}
It is a theorem of W.~Reichel (see \cite{Agricola2008} for the history) 
that the subgroup of~$\GL(7,\bbR)$ that fixes~$\phi$ is a compact, 
connected, simple Lie group of type~\Gtwo. In this article, 
this result will be used to justify the following definition:

\begin{definition}[The group \Gtwo]\label{def: G2}
\begin{equation}\label{eq: G2 def}
\Gtwo = \left\{\ g\in \GL(7,\bbR)\ \vrule\ g^*(\phi) = \phi\ \right\}.
\end{equation}
\end{definition}

\subsection{Associated structures}\label{sssec: assoc strucs}
I will list here a few properties of \Gtwo\ 
that will be needed in this article. 

\Gtwo\ preserves the metric and orientation on~$\bbR^7$ 
for which the basis~$e_1,e_2,\ldots,e_7$ is an oriented orthonormal basis.  

\Gtwo\ acts transitively on the unit sphere $S^6\subset\bbR^7$,
and the \Gtwo-stabilizer of any~$u\in S^6$ 
is isomorphic to~$\SU(3)\subset\SO(6)$; 
thus, $S^6 = \Gtwo/\SU(3)$.  
Since $\SU(3)$ acts transitively on~$S^5\subset\bbR^6$, 
it follows that \Gtwo\ acts transitively 
on the set of orthonormal pairs of vectors in~$\bbR^7$. 

\Gtwo\ preserves the cross-product operation, which is defined
as the unique bilinear mapping~$\times:\bbR^7\times\bbR^7\to\bbR^7$ 
that satisfies 
$$
(u\times v) \cdot w = \phi(u,v,w).
$$
It follows that $u\times v = - v\times u$ is perpendicular
to both $u$ and $v$, and, since \Gtwo\
acts transitively on orthonormal pairs, the evident identities
$e_1\times e_2 = e_3$ and $e_1\times(e_1\times e_2) = - e_2$ 
imply that $u\times(u\times v) = (u\cdot v)\, u - (u\cdot u)\,v$
for all $u,v\in\bbR^7$.

\Gtwo\ acts simply transitively on the set of orthonormal triples~$(u,v,w)$
in $\bbR^7$ that satisfy $(u\times v)\cdot w = 0$.

\subsection{The standard almost-complex structure}\label{ssec: stdJ}
The above formulae imply that there is an almost-complex structure
$\Js:TS^6\to TS^6$ on~$S^6$ that is invariant under the action of
\Gtwo\ defined by the formula
$$
\Js_u(v) = u\times v
\qquad \text{for all}\ v\in T_uS^6 = u^\perp \subset \bbR^7
$$
In fact, it is not difficult to show (see~\cite{Bryant1982})
that the group of almost-complex automorphisms of $(S^6,\Js)$
is equal to \Gtwo\ .

\subsection{The moving frame}
Let~$g:\Gtwo\to\SO(7)$ be the inclusion mapping and write~$g = (g_i)$
where~$g_i:\Gtwo\to\bbR^7$ is the $i$-th column of~$g$.  Set
\be
x = g_1,\ f_1 = {\ts\frac12}(g_2-\iC\,g_3), 
\ f_2 = {\ts\frac12}(g_4-\iC\,g_5),
\ f_3 = {\ts\frac12}(g_6-\iC\,g_7),
\ee 
Then, as shown in~\cite{Bryant1982}, there are left-invariant, complex-valued 
$1$-forms~$\theta_i$ ($1\le i\le 3$) and~$\kappa_{i\bj}$ ($1\le i,j\le 3$) 
defined on \Gtwo\ and
satisfying~$\kappa_{i\bj} = -\overline{\kappa_{j\bi}}$ 
and~$\kappa_{1\bar1}+\kappa_{2\bar2}+\kappa_{3\bar3}= 0$ 
such that the \emph{first structure equations} hold, i.e.,
\be\label{eq: 1ststreqs}
\begin{aligned}
\d x &= -2\iC\,f_j\,\theta_j + 2\iC\,\bar f_j\,\overline{\theta_j}\,,\\
\d f_i &= -\iC\,x\,\overline{\theta_i} + f_\ell\,\kappa_{\ell\bi}
             -\epsilon_{ijk}\,\overline{f_j}\,\theta_k\,,
\end{aligned}
\ee
where $\epsilon_{ijk}$ is skew-symmetric in its indices and $\epsilon_{123}=1$. 
These complex-valued $1$-forms 
satisfy the \emph{second structure equations}
\be\label{eq: 2ndstreqs}
\begin{aligned}
\d \theta_i &= -\kappa_{i\bell}\w\theta_\ell 
                + \epsilon_{ijk}\,\overline{\theta_j}\w\overline{\theta_k},\\
\d\kappa_{i\bj} &= -\kappa_{i\bk}\w\kappa_{k\bj} 
                  + 3\,\theta_i\w\overline{\theta_j} 
                    -\delta_{ij}\,\theta_k\w\overline{\theta_k}\,.
\end{aligned}
\ee

It will be useful to have the structure equations in matrix form.  
Set
\be\label{eq: matrixforms}
\theta = \begin{pmatrix}\theta_1\\ \theta_2\\ \theta_3\\ \end{pmatrix}\,,
\ 
\Theta = \begin{pmatrix}\theta_2\w\theta_3\\
                        \theta_3\w\theta_1\\
                        \theta_1\w\theta_2\\ \end{pmatrix}\,,
\quad\text{and}\quad
\kappa = \begin{pmatrix}
         \kappa_{1\bar1}&\kappa_{1\bar2}&\kappa_{1\bar3}\\
         \kappa_{2\bar1}&\kappa_{2\bar2}&\kappa_{2\bar3}\\
         \kappa_{3\bar1}&\kappa_{3\bar2}&\kappa_{3\bar3}\\ 
          \end{pmatrix}\,.
\ee
Then the second structure equations take the compact form
\be\label{eq: 2ndstreqsmatrixform}
\begin{aligned}
\d \theta &= -\kappa\w\theta + 2\,\overline{\Theta},\\
\d\kappa &= -\kappa\w\kappa 
                  + 3\,\theta\w {}^t\overline{\theta} 
                    -{}^t\theta\w\overline{\theta}\,\mathrm{I}_3\,.
\end{aligned}
\ee

\subsection{\Gtwo-invariant forms}\label{ssec: G2invforms}
The structure equations~\eqref{eq: 2ndstreqs} 
imply that the almost-complex structure~$\Js$ on~$S^6$
has (and is defined by) the property
that a complex-valued $1$-form~$\alpha$ on~$S^6$ is of $\Js$-type~$(1,0)$
if and only if $x^*\alpha$ is a linear combination of the~$\theta_i$.

The standard metric~$g$ induced on~$S^6$ by its inclusion
into~$\bbR^7$ has the form
\be\label{eq: g_as_theta}
x^*g = 4\,{}^t\theta\circ\bar\theta.
\ee
The $2$-form~$\omega$ on $S^6$ that is associated to~$\Js$ via~$g$ 
satisfies
\be
x^*\omega = 2i\,{}^t\theta\w\bar\theta\,.
\ee
This $2$-form is not closed; by the structure equations,
\be
\d\omega = 3 \Im\bigl(\Upsilon\bigr),
\ee
where~$\Upsilon$ is complex-valued $3$-form on~$S^6$ that satisfies
\be
x^*\Upsilon = 8\,\theta_1\w\theta_2\w\theta_3\,.
\ee
Note that $\Upsilon$ is of $\Js$-type~$(3,0)$.
It is not closed, but satisfies
\be
\d\Upsilon = 2\,\omega^2.
\ee

The forms~$\omega$, $\Re\Upsilon$, and $\Im\Upsilon$ 
generate the ring of \Gtwo-invariant forms on~$S^6$.  
In fact, as explained in~\cite{Bryant2005}, 
the form $\omega$ determines~$\Upsilon$, $\Js$,
and the metric~$g$, and, consequently, 
the (pseudo-)group of (local) diffeomorphisms of~$S^6$ 
that preserve~$\omega$ is the (pseudo-)group 
generated by the action of~\Gtwo. 

\section{Some linear algebra}

\subsection{Compatibility}\label{ssec: linalgcompat}
On a given vector space~$V$ of even dimension over~$\bbR$, 
there is a well-known set of relations among the set of
quadratic forms on~$V$, the set of symplectic structures on~$V$,
and the set of complex structures on~$V$.

A pair $(g,J)$ consisting of a nondegenerate inner product~$g$ on~$V$
and a complex structure~$J:V\to V$ is said to be \emph{compatible}
if $g(Jv,Jw) = g(v,w)$, or equivalently, $g(Jv,w)+g(v,Jw)=0$.
In particular, the bilinear form~$\omega$ on~$V$
defined by~$\omega(v,w) = g(Jv,w)$ is nondegenerate and satisfies
$\omega(w,v) = -\omega(v,w)$, so $\omega$ is a symplectic structure on~$V$.  

A pair~$(\omega,J)$ consisting of a symplectic form~$\omega$ on~$V$
and a complex structure~$J:V\to V$ is said to be \emph{compatible} 
if $\omega(Jv,Jw)=\omega(v,w)$, 
or, equivalently, $\omega(v,Jw)=\omega(w,Jv)$.
In particular, the bilinear form~$g$ on$V$ 
defined by~$g(v,w)=\omega(v,Jw)$ is nondegenerate and satisfies
$g(w,v)=g(v,w)$, so $g$ is a nondegenerate inner product on~$V$.
One says that an $\omega$-compatible complex structure~$J$ 
has $\omega$-index~$(p,q)$ (where $p{+}q=n$) if the $J$-compatible metric 
defined by~$g(v,w) = \omega(v,Jw)$ has inertial index~$(2p,2q)$.  
Note that, if an $\omega$-compatible $J$ has $\omega$-index~$(p,q)$, 
then $-J$ (which is automatically $\omega$-compatible) 
has $\omega$-index~$(q,p)$.
 
For a vector space~$V$ over~$\bbR$ of dimension~$2n$,
the space of complex structures on~$V$ is a homogeneous space
$\GL(V)/\GL(V,J)\simeq \GL(2n,\bbR)/\GL(n,\bbC)$ 
and, hence, is a smooth manifold of dimension~$2n^2$.

For a symplectic structure~$\omega$ 
on a real vector space~$V$ of dimension~$2n$, 
the set of $\omega$-compatible complex structures on~$V$ 
has $n{+}1$ connected components:  
The space of $\omega$-compatible complex structures on~$V$ 
with $\omega$-index~$(p,q)$ 
is a connected homogeneous space~$\Sp(V,\omega)/\Un(V,\omega,J)
\simeq \Sp(n,\bbR)/\Un(p,q)$, and hence has dimension~$n^2{+}n$.  

Correspondingly, for a nondegenerate inner product~$g$ 
on a real vector space~$V$ of dimension~$2n$, 
the set of $g$-compatible complex structures on~$V$ 
is empty unless $g$ has inertial index~$(2p,2q)$ 
for some $(p,q)$ with $p{+}q=n$.
When $g$ does have index~$(2p,2q)$, 
the set of $g$-compatible complex structures on~$V$ 
is a homogeneous space~$\Or(V,g)/\Un(V,g,J)
\simeq \Or(2p,2q)/\Un(p,q)$ and, hence, has dimension~$n^2{-}n$.

In particular, note that, for a nondegenerate inner product~$g$
of even index~$(2p,2q)$ on a vector space~$V$ of real dimension~$2n$,
the space of $g$-compatible complex structures on~$V$ 
is a submanifold of codimension $n^2{+}n$ 
in the space of all complex structures on~$V$, 
while, for a symplectic structure~$\omega$ on~$V$, 
the space of $\omega$-compatible complex structures on~$V$ 
is a submanifold of codimension $n^2{-}n$
in the space of all complex structures on~$V$.

\subsection{The almost-complex structures on a manifold}
The linear algebra considerations in~\S\ref{ssec: linalgcompat}
have natural analogs for structures on smooth manifolds.

The set of (smooth) almost-complex structures on
a smooth manifold~$M$ of dimension~$2n$
is the (possibly empty) set of (smooth) sections 
of a canonically defined smooth bundle~$\pi:\cJ(M)\to M$ 
whose fiber over a point~$x\in M$ is the space 
of complex structures on the real vector space~$T_xM\simeq \bbR^{2n}$ 
and, hence, is identifiable 
with the homogeneous space~$\GL(2n,\bbR)/\GL(n,\bbC)$,
which has dimension~$2n^2$.  
When~$\pi:\cJ(M)\to M$ does have continuous sections, 
one says that the set of almost-complex structures on~$M$ 
depends on~$2n^2$ functions of~$2n$ variables.

When~$M$ is endowed with a Riemannian metric~$g$, 
one can consider the subbundle~$\cJ(M,g)\subset\cJ(M)$ 
whose sections are the $g$-compatible almost-complex structures on~$M$.
The fiber of the subbundle~$\pi:\cJ(M,g)\to M$ over a point~$x\in M$
is identifiable with $\Or(2n)/\Un(n)$, 
and, hence, the subbundle~$\cJ(M,g)$ has codimension~$n^2{+}n$ in~$\cJ(M)$.
It is easy to show that, if the bundle~$\pi:\cJ(M)\to M$ does have sections,
then so does~$\pi:\cJ(M,g)\to M$, so that the set of $g$-compatible
almost-complex structures on~$M$ 
depends on $n^2{-}n$ functions of $2n$ variables.

Meanwhile, when $M$ is endowed with a nondegenerate $2$-form~$\omega$
(not assumed to be closed), 
one can consider the subbundle~$\cJ(M,\omega)\subset\cJ(M)$ 
whose sections are the $\omega$-compatible almost-complex structures on~$M$.
This subbundle is the disjoint union of subbundles~$\cJ_q(M,\omega)$,
where the fiber of $\cJ_q(M,\omega)$ over~$x\in M$ 
consists of the $\omega_x$-compatible complex structures on~$T_xM$ 
that have $\omega_x$-index~$(n{-}q,q)$, 
and hence is identifiable with~$\Sp(n,\bbR)/\Un(n{-}q,q)$. 
Hence, the subbundle~$\cJ_q(M,\omega)$ has codimension~$n^2{-}n$ in~$\cJ(M)$.
It is easy to show that, if the bundle~$\pi:\cJ(M)\to M$ does have sections,
then so does~$\pi:\cJ_0(M,\omega)\to M$, 
so that the set of $\omega$-compatible almost-complex structures on~$M$ 
depends on $n^2{+}n$ functions of $2n$ variables.

\subsection{Special features when $M=S^6$}\label{ssec: JonS6}
The fibers of the bundle~$\pi:\cJ(S^6)\to S^6$ have dimension~$18$.

$\cJ(S^6,g)\subset\cJ(S^6)$ is a subbundle of codimension~$12$
that is a bundle deformation retract of $\cJ(S^6)$, 
so that any almost-complex structure~$J$ on~$S^6$ is homotopic to one 
that is orthogonal with respect to the standard metric~$g$ on~$S^6$.

Using the (unique) spin structure on~$S^6$,  
one can show (see~\cite{Bryant2005}) 
that any $g$-orthogonal almost-complex structure~$J$ on~$S^6$ is homotopic
through $g$-orthogonal almost-complex structures to either~$\Js$ or~$-\Js$
(depending on which of the two orientations~$J$ induces on~$S^6$).

Consequently, there are exactly two homotopy equivalence classes
of almost-complex structures on~$S^6$, $[\Js]$ and $[-\Js]$.

Meanwhile,~$\cJ(S^6,\omega)\subset\cJ(S^6)$ is a subbundle of codimension~$6$
and is the disjoint union of the subbundles~$\cJ_q(S^6,\omega)$
for $0\le q\le 3$.  $\Js$ is a section of~$\cJ_0(S^6,\omega)$
while $-\Js$ is a section of~$\cJ_3(S^6,\omega)$.  The following
simple lemma will be used below.

\begin{lemma}\label{lem: no_split_J}
The subbundles~$\cJ_1(S^6,\omega)$ and~$\cJ_2(S^6,\omega)$ 
have no continuous sections over~$S^6$.
\end{lemma}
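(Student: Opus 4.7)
My plan is to convert the existence of a section of $\cJ_q(S^6,\omega)$ with $q\in\{1,2\}$ into the existence of a continuous rank-$2$ subbundle of $TS^6$, and then contradict $\chi(S^6)\neq 0$ via the multiplicativity of the Euler class.

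First I would note that a continuous section $J$ of $\cJ_q(S^6,\omega)$ produces, through the pointwise construction $g_J(v,w)=\omega(v,Jw)$ from~\S\ref{ssec: linalgcompat}, a continuous nondegenerate symmetric $(0,2)$-tensor on $S^6$ of constant inertial index $(6{-}2q,2q)$. For $q=1$ this is signature $(4,2)$ and for $q=2$ it is $(2,4)$; in either case $g_J$ is indefinite.

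Second, I would invoke the classical fact that any continuous nondegenerate symmetric bilinear form on a real vector bundle can be simultaneously diagonalized with a chosen positive-definite one; applied fiberwise with the standard Riemannian metric on $S^6$ as a background, this yields a continuous splitting $TS^6=L\oplus F$ into $g_J$-negative and $g_J$-positive subbundles, one of which (call it $L$) has rank $2$ and the other (call it $F$) has rank $4$.

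Finally I would extract the contradiction from characteristic classes. Since $H^1(S^6;\bbZ/2)=0$, the rank-$2$ bundle $L$ is orientable, and the orientation of $TS^6$ then induces compatible orientations on $L$ and $F$. Hence $e(L)\in H^2(S^6;\bbZ)=0$, so $e(L)=0$, and by the product formula $e(TS^6)=e(L)\smile e(F)=0$. This contradicts $\langle e(TS^6),[S^6]\rangle=\chi(S^6)=2$. I do not foresee any substantial obstacle; the only slightly delicate point is the continuous simultaneous diagonalization of the indefinite form against the standard metric, which is classical, and the rest is bookkeeping with characteristic classes.
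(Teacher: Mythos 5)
Your proposal is correct and follows essentially the same route as the paper: use $g_J(v,w)=\omega(v,Jw)$ to get an indefinite metric of index $(4,2)$ or $(2,4)$, split $TS^6$ into a rank-$2$ and a rank-$4$ subbundle, and contradict the nonvanishing of $e(TS^6)$ via the product formula, since the rank-$2$ factor is orientable with Euler class in $H^2(S^6)=0$. The only cosmetic difference is that the paper reduces the $q=2$ case to $q=1$ by replacing $J$ with $-J$, whereas you treat both signatures directly.
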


\begin{proof}
A continuous section~$J$ of~$\cJ_1(S^6,\omega)$ 
would allow one to construct a nondegenerate quadratic form 
of inertial index~$(4,2)$ on~$S^6$, 
and this would imply that the tangent bundle of~$S^6$ 
can be written as a direct sum~$TS^6 = E_4\oplus E_2$ 
of two oriented subbundles, $E_4$ of rank~$4$ and $E_2$ of rank~$2$.  
However, because the Euler class of~$TS^6$ is nonzero, 
such a splitting would imply $0\not= e(TS^6) = e(E_4)e(E_2) = 0$.

If $J$ were a continuous section of~$\cJ_2(S^6,\omega)$, then
$-J$ would be a continuous section of~$\cJ_1(S^6,\omega)$.
\end{proof}

Finally, note that the two subbundles $\cJ(S^6,g)$ and $\cJ(S^6,\omega)$
intersect transversely 
along the images of the sections~$\pm\Js:S^6\to\cJ(S^6)$, in fact,
$$
\cJ(S^6,g)\cap \cJ_0(S^6,\omega) = \phm\Js(S^6)\subset \cJ(S^6),
$$
while
$$
\cJ(S^6,g)\cap \cJ_3(S^6,\omega) = -\Js(S^6)\subset \cJ(S^6).
$$
Meanwhile, the subbundles $\cJ(S^6,g)$ and $\cJ_i(S^6,\omega)$
for $i=1,2$ do not intersect transversely.  
In fact, for $i=1$~or~$2$, the intersections
$$
\cJ_i(S^6,\omega,g) = \cJ_i(S^6,\omega)\cap \cJ(S^6,g)
$$  
are bundles over~$S^6$ whose fibers are diffeomorphic 
to~$\SU(3)/\Un(2)\simeq \bbC\bbP^2$ and so have dimension~$4$.
For further discussion of these bundles, see Remark~\ref{rem: propersub}.

\section{\Gtwo-invariants of almost-complex structures}\label{sec: invs}

Consider an arbitrary smooth almost-complex structure~$J$ on~$S^6$.
While there are no zeroth-order invariants of~$J$ 
under the action of the group~$\Diff(S^6)$, 
there are zeroth-order invariants 
under the action of the isometry group~$\SO(7)$
and even more under the action of the group~$\Gtwo$.  
This corresponds to the fact (discussed above)
that the space of all complex structures on~$\bbR^6$, 
which is a homogeneous space~$\GL(6,\bbR)/\GL(3,\bbC)$
of dimension~$18$, is not homogeneous under the action
of~$\SO(6)$, much less that of~$\SU(3)$.

Let $\pi: F_J\to S^6$ be the right principal $\GL(3,\bbC)$-bundle over~$S^6$
whose elements are the $J$-linear isomorphisms~$u:T_{\pi(u)}S^6\to\bbC^3$.
The action of $\GL(3,\bbC)$ on~$F_J$ is given by $u\cdot A = A^{-1}\circ u$.
Let $\eta$ be the canonical $\bbC^3$-valued $1$-form on~$F_J$; it is 
defined by the formula
$$
\eta(v) = u\bigl(\pi'(v))
$$
for all~$v\in T_uF_J$.

Now let~$B_J\subset \Gtwo\times F_J$ be the pullback bundle over $S^6$
consisting of the pairs~$(g,u)$ such that $x(g) = \pi(u)$.  
It is a principal right $\SU(3){\times}\GL(3,\bbC)$-bundle over~$S^6$,
and its projection onto either $\Gtwo$ or $F_J$ 
is a surjective submersion with connected fibers.  
From now on, all forms defined on either $F_J$, $\Gtwo$, or~$S^6$ 
will be regarded as pulled back to $B_J$ without notating the pullback. 

There exist unique mappings~$r,s:B_J\to M_{3,3}(\bbC)$
such that
\be
\begin{aligned}
\theta &= r\,\eta + s\,\overline{\eta}\\
\bar\theta &= \overline{s}\,\eta + \overline{r}\,\overline{\eta}.
\end{aligned}
\ee
Because of the linear independence of the~$\eta^i$ and the~$\overline{\eta^i}$ as well as the $\theta_j$ and $\overline{\theta_j}$,
\be
\det\bpm r & s\\ \bar s & \bar r \epm \not=0.
\ee
(Note that this determinant is real valued;
in fact, $J$ induces the same orientation on~$S^6$ as $\Js$ 
if and only if this determinant is positive.)

Now, the canonical forms satisfy
\be\label{eq: theta-eta-action}
R^*_{(g,h)}\theta = g^{-1}\theta
\qquad\text{and}\qquad R^*_{(g,h)}\eta = h^{-1}\eta
\ee 
for $g\in \SU(3)$ and $h\in\GL(3,\bbC)$, which implies that
\be\label{eq: r-s-action}
R^*_{(g,h)}r = g^{-1}r h
\qquad\text{and}\qquad R^*_{(g,h)}s = g^{-1}s\overline{h}.
\ee

\subsection{Symmetric tensorial invariants}\label{ssec: symmtensorinvs}
Chern~\cite{Chern2004} observed that the formulae
\eqref{eq: theta-eta-action} and \eqref{eq: r-s-action} imply 
that there exist two positive semi-definite $J$-hermitian 
quadratic forms~$P_J$ and~$Q_J$ on~$S^6$ that satisfy
\be
\begin{aligned}
P_J &= {}^t(r\,\eta)\circ \overline{(r\,\eta)} 
= {}^t\eta\,\bigl({}^tr\overline{r}\bigr)\,\overline{\eta}\\
Q_J &= {}^t(\bar{s}\,\eta)\circ(s\,\overline{\eta})
= {}^t\eta\,\bigl({}^t\overline{s} s\bigr)\,\overline{\eta}.
\end{aligned}
\ee
While one does not know \emph{a priori} whether~$P_J$ or~$Q_J$ 
might be positive definite, the sum~$P_J+Q_J 
={}^t\eta\,\bigl({}^tr\overline{r}+{}^t\overline{s}s\bigr)
\,\overline{\eta}$ is positive definite because 
the invertible matrix
$$
\bpm {}^tr & {}^t\bar s\\ {}^ts & {}^t\bar r \epm 
\bpm \bar r & \bar s\\ s & r \epm
= \bpm {}^tr\overline{r}+{}^t\overline{s}s & {}^t r\bar s + {}^t\bar s r\\ 
 {}^t s\bar r + {}^t\bar r s & {}^ts\bar s+ {}^t\bar r r \epm
$$
is Hermitian positive definite.  In addition, 
there exists a $J$-complex quadratic form~$\gamma_J$ on~$S^6$ 
that satisfies
\be 
\gamma_J = {}^t(r\,\eta)\circ(\bar{s}\,\eta)
= {\ts\frac12}\,{}^t\eta\,\bigl({}^tr\bar{s}+{}^t\bar{s} r\bigr)\,\eta.
\ee

Note that the metric~$g=4\,{}^t\theta\circ\bar\theta$ on~$S^6$ satisfies
$g= 4\,\gamma_J + 4\,(P_J{+}Q_J) + 4\,\overline{\gamma_J}$,
implying that $\gamma_J$ and $P_J{+}Q_J$ are tensors on~$S^6$ 
that depend only on $J$ and the metric~$g$.  In fact, the above
formulae show that the $J$-type decomposition of~$g$ is given by
\be\label{eq: gJinvariants}
\begin{aligned}
g^{(2,0)}_J &= 4\,\gamma_J 
 = 2\,{}^t\eta\,\bigl({}^tr\bar{s}+{}^t\bar{s} r\bigr)\,\eta\\
g^{(1,1)}_J &= 4\,(P_J+Q_J)
 = 4\,{}^t\eta\,\bigl({}^tr\bar{r}+{}^t\bar{s}s\bigr)\,\bar{\eta}\\
g^{(0,2)}_J &= 4\,\overline{\gamma_J}
 = 2\,{}^t\bar\eta\,\bigl({}^t\bar{r}s+{}^ts\bar{r}\bigr)\,\bar\eta\,.
\end{aligned}
\ee

\subsection{Alternating tensorial invariants}\label{ssec: alttensorinvs}
Meanwhile, for the \Gtwo-invariant differential forms, 
one has a $J$-type decomposition of~$\omega$ as
\be\label{eq: omegaJtypes}
\omega = \omega^{(2,0)}_J + \omega^{(1,1)}_J + \omega^{(0,2)}_J
\ee 
where, using the above notation,
\be
\begin{aligned}
\omega^{(2,0)}_J 
&=\phantom{2}i\,\,{}^t\eta\w\bigl({}^tr\bar{s}-{}^t\bar{s} r\bigr)\,\eta\\
\omega^{(1,1)}_J
&=2i\,\,{}^t\eta\w\bigl({}^tr\bar{r}-{}^t\bar{s}s\bigr)\,\bar{\eta}\\
\omega^{(0,2)}_J 
&=\phantom{2}i\,\,{}^t\bar\eta\w\bigl({}^t\bar{r}s-{}^ts\bar{r}\bigr)
\,\bar\eta\,.
\end{aligned}
\ee
Also, the $3$-form~$\Upsilon$ has a $J$-type
decomposition as a sum of terms~$\Upsilon^{(p,q)}_J$.  
In what follows, only the following two formulae will be needed: 
\be\label{eq: UpsilonJtypes}
\begin{aligned}
\Upsilon^{(3,0)}_J &= 8 \det(r)\,\eta_1\w\eta_2\w\eta_3\,,\\
\Upsilon^{(0,3)}_J &= 8 \det(s)\,
\overline{\eta_1}\w\overline{\eta_2}\w\overline{\eta_3}\,.
\end{aligned}
\ee

\section{Integrability}\label{sec: integrab}

The major unsolved question concerning almost-complex structures on~$S^6$
is whether there exists an \emph{integrable} one. 

By the celebrated theorem of Newlander and Nirenberg~\cite{NewNiren},
in order for an almost-complex structure~$J$ on a manifold~$M$ 
to be integrable, i.e., to be the almost-complex structure 
induced by an actual complex structure on~$M$,
it is necessary and sufficient that the ideal generated by the 
$J$-linear complex $1$-forms on~$M$ be closed under exterior differentiation.

This condition is equivalent to the vanishing of the Nijnhuis tensor of~$J$,
which is simply the condition that, when one writes the $J$-type
decomposition of the exterior derivative%
\footnote{By definition, $d^{p,q}_J\alpha = \pi^{k+p,l+q}_J(\d\alpha)$
when $\alpha$ has $J$-type $(k,l)$.}
$$
\d = \d^{2,-1}_J + d^{1,0}_J + d^{0,1}_J + d^{-1,2}_J\,
$$
the operator $d^{2,-1}_J$ vanishes identically 
(which implies that $d^{-1,2,}_J$ vanishes identically as well).

For example, since the structure equations imply
$\d^{2,-1}_{\Js}\omega = -{\textstyle\frac32}i\,\Upsilon\not=0$, 
it follows that the \Gtwo-invariant almost-complex structure~$\Js$ 
on~$S^6$ is not integrable; indeed, its Nijnhuis tensor is nowhere vanishing. 

\subsection{Metric compatibility}\label{ssec: metriccomp}
It follows from~\eqref{eq: gJinvariants} 
that $g$ is $J$-Hermitian, i.e., $J$ is orthogonal with respect to~$g$
(which is the same as saying that $(g,J)$ are compatible) 
if and only if $\gamma_J=0$.  As noted above, 
this condition constitutes~$12$ real equations on~$J$ 
that are of order~$0$.

\begin{theorem}[Blanchard~\cite{Blanchard1953}, LeBrun~\cite{LeBrun1987}]
\label{thm: blanchardlebrun}
There is no $g$-compatible complex structure on~$S^6$.
\end{theorem}  

\begin{proof} (Sketch.)
The arguments of Blanchard and LeBrun (which are similar) 
depend on the following idea 
(which works in all dimensions):  If $J$ is an almost-complex structure
on an open subset~$U$ in the $2n$-sphere~$S^{2n}\subset\mathbb{R}^{2n+1}$ 
that is orthogonal with respect to the standard metric on~$S^{2n}$, 
then one can define an embedding~$\tau:U\to\Gr_{n}(\mathbb{C}^{2n+1})$
by letting $\tau(u)\subset \mathbb{C}^{2n+1}$ be the complex $n$-plane
consisting of the vectors of the form~$v-i\,J_uv$ 
for~$v\in T_uS^n = u^\perp\subset\mathbb{R}^{2n{+}1}$.

LeBrun showed that, 
when $J$ is orthogonal with respect to the induced metric, 
the integrability of~$J$
is equivalent to the condition that $\tau:U\to\Gr_{n}(\mathbb{C}^{2n+1})$
have complex linear differential (when $J$ is used to define the
almost-complex structure on~$U$).  In particular, in the integrable case,
$\tau(U)$ is a complex submanifold of~$\Gr_{n}(\mathbb{C}^{2n+1})$ 
and hence inherits a K\"ahler structure from the standard K\"ahler structure
on~$\Gr_{n}(\mathbb{C}^{2n+1}) 
= \Un(2n{+}1)/\bigl(\Un(n){\times}\Un(n{+}1)\bigr)$.

Since $S^{2n}$ does not have a K\"ahler structure for~$n>1$ 
(because $H^2(S^{2n})=0$ when~$n>1$), 
it follows that, for~$n>1$, 
there cannot be an integrable complex structure on~$S^{2n}$ 
that is orthogonal with respect to the standard metric.%
\footnote{By~\cite{BorelSerre1953}, 
$S^{2n}$ has a continuous almost-complex structure 
only when $n=1$ or $n=3$, but this is a subtle, topological result.
An alternative proof of this result, which uses Bott Periodicity, 
can be found in \cite{LawsonMichelsohn}.}
\end{proof}

\subsection{$\omega$-compatibility}\label{ssec: chern}
As discussed in~\S\ref{ssec: JonS6}, 
the condition that $J$ be $\omega$-compatible 
constitutes~$6$ equations of order~$0$ on~$J$.  
Thus, $\omega$-compatibility is 
a much less restrictive condition than $g$-compatibility.  
Nevertheless, Chern's arguments~\cite{Chern2004}
show that there is no $\omega$-compatible complex structure on~$S^6$.

\begin{theorem}[Chern]\label{thm: chern}
Any $\omega$-compatible complex
structure on a connected open set~$U\subset S^6$ 
is a section of either $\cJ_1(U,\omega)$ or $\cJ_2(U,\omega)$.
In particular, there is no $\omega$-compatible complex structure on~$S^6$.
\end{theorem}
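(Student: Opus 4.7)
My plan is to work on the bundle $B_J$ and translate both hypotheses into algebraic conditions on the matrices $r,s:B_J\to M_{3,3}(\bbC)$ of~\S\ref{ssec: symmtensorinvs}. The $\omega$-compatibility of $J$ is exactly the vanishing of $\omega^{(2,0)}_J$, so that $\omega$ reduces to
$$
\omega = \omega^{(1,1)}_J = 2i\,{}^t\eta\w\bigl({}^tr\bar r-{}^t\bar s s\bigr)\bar\eta.
$$
The Hermitian matrix $H:={}^tr\bar r-{}^t\bar s s$ is everywhere nondegenerate (since $\omega$ is), and its signature recovers the $\omega$-index of $J$ pointwise: positive definite means $\omega$-index $(3,0)$, negative definite means $(0,3)$, and mixed signatures correspond to $(2,1)$ or $(1,2)$. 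The signature of a continuous nondegenerate Hermitian form is locally constant, hence constant on the connected set~$U$.

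The heart of the argument is to extract a single scalar identity from integrability. By Newlander--Nirenberg, integrability of $J$ forces $\d^{2,-1}_J=0$, so in particular the $(3,0)_J$-component of $\d\omega$ must vanish. Using $\d\omega=3\,\Im\Upsilon=\tfrac{3}{2i}(\Upsilon-\overline{\Upsilon})$ together with the formulas~\eqref{eq: UpsilonJtypes} and the identity $(\overline\Upsilon)^{(3,0)}_J=\overline{\Upsilon^{(0,3)}_J}$, one reads off
$$
(\d\omega)^{(3,0)}_J = -12i\,\bigl(\det r-\overline{\det s}\bigr)\,\eta_1\w\eta_2\w\eta_3.
$$
Integrability therefore yields the scalar identity $\det r=\overline{\det s}$, whence $|\det r|=|\det s|$.

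To finish, I rule out the two extremal indices by a classical determinant inequality. If $J$ has $\omega$-index $(3,0)$ at some point, then $H>0$ there and, writing $A:={}^tr\bar r=H+{}^t\bar s s$ with $H>0$ and $B:={}^t\bar s s\ge 0$, one obtains $\det A>\det B$, i.e.\ $|\det r|^2>|\det s|^2$, contradicting the integrability identity; index $(0,3)$ is excluded symmetrically. Thus $J$ has pointwise $\omega$-index in $\{(1,2),(2,1)\}$, and by the constancy noted above it is globally a section of exactly one of $\cJ_1(U,\omega)$ or $\cJ_2(U,\omega)$. Applied with $U=S^6$, Lemma~\ref{lem: no_split_J} then excludes both possibilities, giving the nonexistence claim. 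The main obstacle is the second step: extracting the clean scalar identity $\det r=\overline{\det s}$ directly from the structure equations without first performing a sequence of frame adaptations---once that identity is in hand, the contradictions with definiteness are immediate linear algebra.
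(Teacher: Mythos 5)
Your proposal is correct and follows essentially the same route as the paper's proof: reduce $\omega$-compatibility to $\omega=\omega^{(1,1)}_J$, extract the identity $\det r=\det(\bar s)$ from the vanishing of $\pi^{3,0}_J(\d\omega)=\pi^{3,0}_J(3\Im\Upsilon)$ via~\eqref{eq: UpsilonJtypes}, rule out definiteness of ${}^tr\bar r-{}^t\bar s s$ by the determinant inequality, and invoke Lemma~\ref{lem: no_split_J}. The only cosmetic difference is that you apply the inequality $\det(H+B)>\det B$ directly with $B\ge0$, whereas the paper first observes that $\det r\ne0$ forces ${}^t\bar s s$ to be positive definite before comparing determinants.
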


\begin{proof}  Let $J:U\to\cJ(U,\omega)$
be an integrable section over a connected open~$U\subset S^6$.

Then $\omega^{(2,0)}_J=0$, 
so that $\omega^{(0,2)}_J = \overline{\omega^{(2,0)}_J} =0$,
implying that $\omega = \omega^{(1,1)}_J$.  
As noted above, the hypothesis that~$J$ be integrable, then implies
$$
0 = \d^{2,-1}_J\omega = \pi^{3,0}_J(\d\omega) 
= \pi^{3,0}_J\bigl(3\Im(\Upsilon)\bigr) 
= \pi^{3,0}_J\bigl(-{\textstyle\frac32}i(\Upsilon-\overline{\Upsilon})\bigr).
$$
Thus, by~\eqref{eq: UpsilonJtypes},
\be\label{eq: 30_J_domega}
0 
  = {\textstyle\frac32}i\left(\,\overline{\pi^{0,3}_J(\Upsilon)} 
        {-}\pi^{3,0}_J(\Upsilon)\right)
  = 12i\,\bigl(\det(\bar s){-}\det(r)\bigr)\, \eta_1\w\eta_2\w\eta_3\,,
\ee
which yields the essential identity (first observed by S.-s.~Chern)
\be\label{eq: chernidentity}
\det(\bar s) = \det(r).
\ee

Since~$\omega = \omega^{(1,1)}_J 
= 2i\,\,{}^t\eta\w\bigl({}^tr\bar{r}-{}^t\bar{s}s\bigr)\,\bar{\eta}$
and since~$\omega$ is a nondegenerate $2$-form, it follows that
the Hermitian matrix~${}^tr\bar{r}-{}^t\bar{s}s$ has nonvanishing determinant.
Now, it cannot be either positive definite or negative definite:
If it were positive definite, then ${}^tr\bar{r}\ge {}^tr\bar{r}-{}^t\bar{s}s$ 
would be positive definite and hence $\det(\bar s)=\det(r)$ 
would be nonzero, so that ${}^t\bar{s}s$ would also be positive definite.  
By~\eqref{eq: chernidentity}, 
the two positive definite Hermitian matrices~${}^tr\bar{r}$ and~${}^t\bar{s}s$ would then have the same determinant (volume);
consequently, ${}^tr\bar{r}-{}^t\bar{s}s$ could not be positive definite.
A similar argument using ${}^tr\bar{r}-{}^t\bar{s}s\ge -{}^t\bar{s}s$ 
shows that ${}^tr\bar{r}-{}^t\bar{s}s$ cannot be negative definite either. 

Thus, $\omega^{(1,1)}_J$ must have have $J$-index either~$(2,1)$ or~$(1,2)$, 
and, by the connectedness of~$U$, this index must be constant.
Thus, $J$ must be a section 
of either~$\cJ_1(U,\omega)$ or~$\cJ_2(U,\omega)$, as claimed.

Finally, by Lemma~\ref{lem: no_split_J}, 
there are no global sections of~$\cJ_1(S^6,\omega)$ or~$\cJ_2(S^6,\omega)$,
so there is no $\omega$-compatible complex structure on~$S^6$.
\end{proof}

\begin{remark}[A proper subbundle]\label{rem: propersub}
As the reader will have no doubt realized, 
Chern's basic observation amounts to the fact 
that any integrable~$J$ that satisfies~$\omega^{(0,2)}_J = 0$
must also satisfy~\eqref{eq: 30_J_domega} and that this 
constitutes two additional \emph{algebraic} equations on~$J$.
As the above proof shows, this condition 
implies not only that such a $J$ must be a section 
of~$\cJ_1(S^6,\omega)\cup\cJ_2(S^6,\omega)$, 
but that its image must lie in a certain proper subset
$\cJ'_1(S^6,\omega)\cup \cJ'_2(S^6,\omega)$. 

Now, it is not difficult 
to show that each subset~$\cJ'_i(S^6,\omega)\subset\cJ_i(S^6,\omega)$ 
for $i=1,2$ is a nonempty, smooth subbundle 
of codimension~$2$ in~$\cJ_i(S^6,\omega)$
(and hence is of codimension~$8$ in~$\cJ(S^6)$). 

Of course, it is not clear at this point whether or not there might
be even further restrictions on local complex structures 
that are sections of~$\cJ'_i(S^6,\omega)$
that are derivable by careful use of the structure equations.  
It seems that much of Chern's further computation in the different
versions of his manuscript and communications with me about this
problem were attempts to derive such further integrability conditions 
by use of the structure equations, but, as far as I can tell, these
were inconclusive.  The generality of the set of local integrable
sections of~$\cJ'_i(S^6,\omega)$ remains unknown as of this writing.

However, one can see that the set of local integrable sections
of~$\cJ'_i(S^6,\omega)$ is nonempty as follows:
A part of the structure equations of~$\Gtwo$ can be written as
\be\label{eq: G2quadric}
\begin{aligned}
\d\theta_1 & = -\kappa_{1\bar1}\w\theta_1 
               -\kappa_{1\bar2}\w\theta_2 
               -\kappa_{1\bar3}\w\theta_3 
               + 2\,\overline{\theta_2}\w\overline{\theta_3}\\
\d\overline{\theta_2} &= 
                \kappa_{2\bar2}\w\overline{\theta_2}
               +\kappa_{1\bar3}\w\overline{\theta_3}
               +\kappa_{1\bar2}\w\overline{\theta_1}
               + 2\,\overline{\theta_2}\w\theta_1\\
\d\overline{\theta_3} &= 
                \kappa_{3\bar3}\w\overline{\theta_3}
               +\kappa_{2\bar3}\w\overline{\theta_2}
               +\kappa_{1\bar3}\w\overline{\theta_1}
               + 2\,\overline{\theta_3}\w\theta_1\\
\d\kappa_{1\bar2} &= (\kappa_{2\bar2}-\kappa_{1\bar1})\w\kappa_{1\bar2}
                     +\kappa_{3\bar2}\w\kappa_{1\bar3}
                    +3\,\theta_1\w\overline{\theta_2}\\
\d\kappa_{1\bar3} &= (\kappa_{3\bar3}-\kappa_{1\bar1})\w\kappa_{1\bar3}
                     +\kappa_{2\bar3}\w\kappa_{1\bar2}
                    +3\,\theta_1\w\overline{\theta_2}
\end{aligned}
\ee
These equations imply that the system
$\theta_1 = \overline{\theta_2} = \overline{\theta_3} = \kappa_{1\bar2}
= \kappa_{1\bar3} = 0$ is Frobenius, so that its leaves
are the right cosets of a compact subgroup~$H\subset\Gtwo$ of dimension~$4$
that is isomorphic to~$\Un(2)$, 
and that there is a complex structure on~$\Gtwo/H$ 
for which the $(1,0)$-forms pull back to~$\Gtwo$ 
to be linear combinations of the complex-valued $1$-forms
in~$\{\theta_1,\overline{\theta_2},\overline{\theta_3},
\kappa_{1\bar2},\kappa_{1\bar3}\}$.  
In fact, as shown in~\cite{Bryant1982},
the resulting compact complex manifold~$Q=\Gtwo/H$ 
is biholomorphic to the complex $5$-quadric.

Moreover, there is an embedding of $\Gtwo/H$ into~$\cJ'_2(S^6,\omega)$
that takes the coset~$gH$ to the complex structure~$J(gH)$ on~$T_{x(g)}S^6$
whose $(1,0)$-forms pull back to $T^*_g\Gtwo$ via~$x:\Gtwo\to S^6 
= \Gtwo/\SU(3)$ to be linear combinations
of~$\bigl\{\theta_1,\overline{\theta_2},\overline{\theta_3}\bigr\}$.

One can verify without difficulty that this embeds $\Gtwo/H$ 
as a smooth subbundle of~$\cJ'_2(S^6,\omega)$ 
whose fiber over a point of~$S^6$ is diffeomorphic to~$\mathbb{CP}^2$.
In fact, the image is precisely the subbundle~$\cJ_2(S^6,\omega,g)$
as it was defined at the end of~\ref{ssec: JonS6}.
 
Thus, one has a $\Gtwo$-equivariant fibration~$Q\to S^6$.
While the differential of this projection is not complex linear 
with respect to the almost-complex structures on the domain and range, 
it does have the following property:  
A (local) section of this bundle 
whose image is a holomorphic submanifold of~$Q$ 
(and there are many such, essentially,
they depend on two holomorphic functions of three complex variables) 
represents an integrable almost-complex structure on the domain of the
section.  

Consequently, the subbundle~$Q\subset\cJ'_2(S^6,\omega)$
has many local sections that represent 
$\omega$-compatible complex structures on open sets~$U\subset S^6$.
Thus, whatever integrability conditions remain to be discovered 
for integrable sections of $\cJ'_i(S^6,\omega)$, 
they do not preclude the existence of local solutions.
\end{remark}

\begin{remark}[Almost-symplectic $6$-manifolds]
Chern's argument for~$(S^6,\omega)$ applies to a much wider class
of almost-symplectic $6$-manifolds, namely, to what I will call 
\emph{elliptic definite} almost-symplectic $6$-manifolds.

Recall that an \emph{almost-symplectic $6$-manifold}
is a $6$-manifold~$M$ endowed with a nondegenerate $2$-form~$\omega$.
Given such a structure, one has a unique decomposition
\be\label{eq: om_prim_decomp}
\d\omega = \lambda\w\omega + \pi
\ee 
where $\lambda$ is a $1$-form on~$M$ and $\pi$ is an $\omega$-primitive
$3$-form on~$M$, i.e., a $3$-form on~$M$ that satisfies $\omega\w\pi = 0$.

Of course, if $\pi$ vanishes identically, 
then taking the exterior derivative 
of~\eqref{eq: om_prim_decomp} yields $0 =\d\lambda\w\omega$, 
which implies that $\d\lambda=0$, so that, at least locally,
$\lambda = -\d f$. This implies that $\d(\E^{f}\omega)=0$, 
i.e., $\omega$ is `locally conformally symplectic'.

More interesting is the case when $\pi$ is `generic':  
It turns out (see the Appendix of~\cite{Bryant2005}, for example) 
that there are two open orbits of $\GL(V)$ acting on $\Lambda^3(V^*)$ 
when $V$ has dimension~$6$:
One, the `split type', consists of the $3$-forms 
equivalent to $e^1\w e^2 \w e^3 + e^4\w e^5\w e^6$ 
for some basis~$e^i$ of~$V^*$,
the other, the `elliptic type', consists of $3$-forms 
equivalent to $e^1\w e^3\w e^6{+}e^1\w e^4\w e^5
{+}e^2\w e^3\w e^5{-}e^2\w e^4\w e^6$,
i.e., to~$\Im\bigl((e^1{+}ie^2)\w(e^3{+}ie^4)\w(e^5{+}ie^6)\bigr)$ 
for some basis~$e^i$ of~$V^*$.   

One says that an almost-symplectic $6$-manifold~$(M,\omega)$ 
is of \emph{elliptic type} if~$\pi$, the $\omega$-primitive component 
of~$\d\omega$, is of elliptic type at each point of~$M$.

For example, $S^6$ 
endowed with its $\Gtwo$-invariant $2$-form~$\omega$ is of elliptic type, 
as the structure equations in~\S\ref{ssec: G2invforms} show. 
To be of elliptic type is an open condition on~$\omega$
in the $C^1$-topology on $2$-forms on~$M$.

Suppose now that $(M^6,\omega)$ is a connected almost-symplectic $6$-manifold
of elliptic type, and let~\eqref{eq: om_prim_decomp}, 
where $\omega\w\pi=0$, be the $\omega$-type decomposition of~$\d\omega$.
Then $\pi = 3\Im(\Upsilon)$ for some complex-valued, decomposable $3$-form
$\Upsilon$ on~$M$, 
and this $\Upsilon$ is unique up to replacement by~$-\bar\Upsilon$.
There is a unique almost-complex structure~$J$ on~$M$ 
for which $\Upsilon$ will have $J$-type~$(3,0)$; if one
replaces $\Upsilon$ by $-\bar\Upsilon$, then $J$ must be replaced by~$-J$.
Thus, the choice of $\Upsilon$ (and $J$) will be made unique by requiring 
that $J$ induce the same orientation on~$M$ as does the volume form~$\omega^3$.
Assume that this choice has been made.

Since~$\omega\w(\Upsilon-\bar\Upsilon) \equiv 0$, 
it follows that~$\omega$ must be of $J$-type~$(1,1)$ 
and hence represent an Hermitian form on~$(M,J)$.  
Since~$\omega^3>0$ induces the same orientation on~$M$ as~$J$, 
it follows that the signature of~$\omega$ as a $J$-Hermitian form
is either $(3,0)$ or $(1,2)$.
One says that $(M,\omega)$ is \emph{elliptic definite} 
if this signature is~$(3,0)$.  

It is immediate that the condition for a nondegenerate $2$-form~$\omega$
on~$M^6$ to define a elliptic definite almost-symplectic structure on~$M$
is open in the $C^1$-topology on $2$-forms on~$M$.  
Thus, if one such structure exists on~$M$, then there are many.

For example, $S^6$ endowed with its $\Gtwo$-invariant $2$-form~$\omega$
is elliptic definite.  (The other signature can also occur: 
For example, if one considers~$\Gtwo'\subset\SO(3,4)$, 
the non-compact real form of $\Gtwo$, it turns out that this group 
contains a subgroup isomorphic to~$\SU(1,2)$, and the homogeneous
space~$H^6 = \Gtwo'/\SU(1,2)$ carries a $\Gtwo'$-invariant $2$-form
that is of elliptic type and that has signature $(1,2)$ 
with respect to its natural almost-complex structure~$J$, 
as constructed above.)  

Any strictly nearly-K\"ahler $6$-manifold
defines an elliptic definite almost-symplectic structure.
In fact, Daurtseva~\cite{Daurtseva2014} 
gives a generalization of Chern's argument 
that applies to strictly nearly-K\"ahler $6$-manifolds.

For an elliptic definite almost-symplectic manifold~$(M^6,\omega)$,
the argument of Chern applies essentially without change, 
yielding that any $\omega$-compatible complex structure on~$M$ 
must be a section of either~$\cJ'_1(M,\omega)$ or~$\cJ'_2(M,\omega)$,
the codimension~$2$ subbundles of~$\cJ_1(M,\omega)$ or~$\cJ_2(M,\omega)$
that are defined by the analog of
Chern's determinantal relation~\eqref{eq: chernidentity}.
(Unlike the case of $S^6$, on other $6$-manifolds these bundles can have global
integrable sections.  This happens, for example, for the well-known
homogeneous strictly nearly-K\"ahler structures 
on $S^3{\times}S^3$, $\bbC\bbP^3$, and $F_{1,2}(\bbC^3) = \SU(3)/T^2$.)

For example, this argument implies that there is no
complex structure on $S^6$ that is compatible 
with the $2$-form associated to the strictly nearly-K\"ahler
structure of cohomogeneity~$1$ on $S^6$ constructed recently by 
Foscolo and Haskins~\cite{FoscoloHaskins2017}.

\end{remark}

\bibliographystyle{hamsplain}

\providecommand{\bysame}{\leavevmode\hbox to3em{\hrulefill}\thinspace}

\end{document}